\definecolor{hot}{RGB}{65,105,225}
\def\Supp{{\rm{Supp}\,}}
\newcommand{\bC}{{\mathbb C}}
\newcommand{\bR}{{\mathbb R}}
\newcommand{\bZ}{{\mathbb Z}}
\newcommand{\cV}{{\mathcal V}}
\newcommand{\Hom}{{\rm{Hom}}}
\newcommand{\Ab}{{\rm Ab}}
\newcommand{\Alex}{{\rm Alex}}
\newcommand{\TFM}{{\rm TFM}}
\theoremstyle{plain}
\newtheorem{thm}[subsection]{Theorem}
\newtheorem{prop}[subsection]{Proposition}
\newtheorem{lem}[subsection]{Lemma}
\newtheorem{cor}[subsection]{Corollary}
\newtheorem{df}[subsection]{Definition}
\theoremstyle{definition}
\newtheorem{rem}[subsection]{Remark}
\title[Arbitrary cohomology jump loci]{Examples of topological spaces with arbitrary cohomology jump loci}
\author{Botong Wang}
\address{Department of Mathematics,
University of Notre Dame, 255 Hurley Hall, IN 46556, USA} \email{bwang3@nd.edu}
\begin{document}

\begin{abstract}
Given any subvariety of a complex torus defined over $\bZ$ and any positive integer $k$,  we construct a finite CW complex $X$ such that the $k$-th cohomology jump locus of $X$ is equal to the chosen subvariety, and the $i$-th cohomology jump loci of $X$ are trivial for $i<k$. 
\end{abstract}

\maketitle


\section{Introduction}
The motivation of this note is to explore the consequence of the results of \cite{S1} and \cite{BW}, that the cohomology jump loci of a smooth complex quasi-projective variety are unions of torsion translates of subtori. 

Given a topological space $M$, we denote by $L(M)$ the space of rank one local systems on $M$, which is equal to $\Hom(\pi_1(M), \bC^*)$ and has a group structure. When $\pi_1(M)$ is finitely generated, $L(M)$ has a complex variety structure which is isomorphic to the cartesian product of $(\bC^*)^{b_1(M)}$ and a discrete finite abelian group. In $L(M)$, there are some canonically defined subvarieties called the cohomology jump loci (or characteristic varieties). They are defined to be $\Sigma^i_r(M)=\{\rho\in L(M)\;|\: \dim H^i(M, L_\rho)\geq r\}$, where $L_\rho$ is the rank one local system on $M$ associated to the representation $\rho$. They are always varieties defined over $\bZ$. When $r=1$, we omit $r$ and just write $\Sigma^i(M)$. 

The main result of this note is the following. 

\begin{thm}\label{main}
Fix a positive integer $n$. Let $Z$ be any (not necessarily irreducible) subvariety of $(\bC^*)^n$, which is defined over $\bZ$, and let $k$ be any positive integer. There exists a finite CW complex $M$, such that $L(M)=(\bC^*)^n$, $\Sigma^k(M)\cup \{\mathds{1}\}=Z\cup \{\mathds 1\}$, and $\Sigma^i(M)$ is either empty or equal to $\{\mathds 1\}$ for $i<k$. 
\end{thm}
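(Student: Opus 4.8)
The plan is to realize $M$ with $\pi_1(M)^{\mathrm{ab}}\cong\bZ^n$ and no torsion (so that $L(M)=(\bC^*)^n$ \emph{exactly}) while pushing all nontrivial twisted cohomology into degree $k$, treating $k\ge 2$ and $k=1$ by different constructions. Write $\Lambda=\bZ[x_1^{\pm1},\dots,x_n^{\pm1}]$, which is at once the group ring of $\bZ^n$ and the coordinate ring of $(\bC^*)^n$, and fix Laurent polynomials with $Z=V(g_1,\dots,g_q)$, $I=(g_1,\dots,g_q)$. For $k\ge 2$ I would start from the $n$-torus $T^n$, a finite $K(\bZ^n,1)$ with the decisive property that $H^*(T^n,L_\rho)=0$ for every $\rho\neq\mathds{1}$. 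Form $Y=T^n\vee S^k$; wedging on $S^k$ with $k\ge 2$ leaves $\pi_1$ unchanged, and passing to the universal cover (homotopy equivalent to $\bigvee_{\bZ^n}S^k$) and applying Hurewicz identifies $\pi_k(Y)\cong H_k(\widetilde Y;\bZ)\cong\Lambda$, a free $\Lambda$-module of rank one. Now attach $q$ cells of dimension $k+1$ to $Y$ along $g_1,\dots,g_q\in\pi_k(Y)=\Lambda$, obtaining $M$; since these cells have dimension $\ge 3$, $\pi_1(M)=\bZ^n$ and $L(M)=(\bC^*)^n$. Choosing the attaching maps to lie, upstairs, inside the sub-wedge of $k$-spheres, the cellular chain complex of the universal cover of $M$ splits as a direct sum $C_\bullet(\widetilde{T^n})\oplus D_\bullet$ with $D_\bullet=\bigl(\Lambda^{q}\xrightarrow{(g_i)}\Lambda\bigr)$ in degrees $k+1,k$. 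Since the $\rho$-twisted cohomology of the first summand vanishes for $\rho\neq\mathds{1}$, one gets $H^i(M,L_\rho)=0$ for $i\notin\{k,k+1\}$ and $H^k(M,L_\rho)=\ker\bigl(\bC\xrightarrow{(g_i(\rho))_i}\bC^{q}\bigr)$, which is nonzero precisely when $\rho\in Z$. Thus $\Sigma^k(M)\setminus\{\mathds{1}\}=Z\setminus\{\mathds{1}\}$ and $\Sigma^i(M)\subseteq\{\mathds{1}\}$ for $i<k$; and since $\mathds{1}\in\Sigma^k(M)$ as soon as $H^k(T^n;\bC)\neq 0$ (and otherwise iff $\mathds{1}\in Z$), one concludes $\Sigma^k(M)\cup\{\mathds{1}\}=Z\cup\{\mathds{1}\}$ in all cases.

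For $k=1$ this approach cannot work, because every space with abelian $\pi_1$ has $\Sigma^1\subseteq\{\mathds{1}\}$; instead I would take $M$ to be the presentation $2$-complex of a finitely presented group $G=\langle x_1,\dots,x_n\mid r_1,\dots,r_m\rangle$ with all relators $r_j\in[F_n,F_n]$, so that $G^{\mathrm{ab}}=\bZ^n$ and $L(M)=(\bC^*)^n$. A Fox-calculus computation with the cellular chain complex of the universal abelian cover $\widetilde M$ shows that for $\rho\neq\mathds{1}$ one has $\dim_{\bC}H^1(M,L_\rho)=\dim_{\bC}(B\otimes_\Lambda\bC_\rho)$, where $B=H_1(\widetilde M;\bZ)$ is the Alexander module; hence $\Sigma^1(M)\setminus\{\mathds{1}\}=\Supp(B)\setminus\{\mathds{1}\}$. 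For $G=F_n$ itself one has $B=K:=\ker\bigl(\Lambda^n\xrightarrow{(x_j-1)_j}\Lambda\bigr)$, the Koszul syzygy module, generated by the classes of the commutators $[x_i,x_j]$; and since the reduced Fox gradient maps $[F_n,F_n]$ onto $K$ and sends $wrw^{-1}$ to $\bar w\cdot(\text{gradient of }r)$, imposing as relators suitable conjugated powers of the $[x_i,x_j]$ cuts $K$ down to $K/L$ for any prescribed finitely generated submodule $L\subseteq K$. Taking $L=I\cdot K$ gives $B=K\otimes_\Lambda(\Lambda/I)$, whose support is $\Supp(K)\cap V(I)=(\bC^*)^n\cap Z=Z$ (for $n\ge 2$, $K$ has support the whole torus), and since $\mathds{1}\in\Sigma^1(M)$ always, $\Sigma^1(M)\cup\{\mathds{1}\}=Z\cup\{\mathds{1}\}$, with no lower loci to control. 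The residual case $n=k=1$ would be handled separately, by realizing a rank-one $\bZ[t^{\pm1}]$-module with the prescribed support as the Alexander module of an ascending HNN extension.

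The main obstacle throughout is meeting all the constraints at once: $L(M)=(\bC^*)^n$ has to be the \emph{full} torus with neither torsion nor extra free factors, which rigidly pins down $H_1(M;\bZ)$ and hence $\pi_1(M)$, yet $\Sigma^k(M)$ is required to be an utterly arbitrary $\bZ$-subvariety and every lower jump locus must be trivial. What makes this possible for $k\ge 2$ is the vanishing of twisted cohomology of a finite $K(\bZ^n,1)$ away from $\mathds{1}$, so that attaching cells in the single adjacent pair of dimensions $(k,k+1)$ produces exactly one new jump locus, cut out by a completely unconstrained presentation matrix over $\Lambda$; the technical heart is to carry out that attachment without disturbing $\pi_1$, the lower cohomology, or the direct-sum splitting of the chain complex. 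In the degree-one case the analogous mechanism is the identification of $\Sigma^1$ with the support of the Alexander module, together with the surjectivity and conjugation-equivariance of the Fox gradient $[F_n,F_n]\to K$, which supply just enough flexibility to install the ideal $I$.
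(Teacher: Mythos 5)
Your construction for $k\ge 2$ is exactly the one in the paper: start from the $n$-torus, wedge on a single $k$-sphere, and attach $(k+1)$-cells whose classes in $\pi_k\cong\Lambda$ are the defining Laurent polynomials of $Z$. What differs is the computation. The paper passes through Alexander modules and quotes the comparison theorem of Papadima--Suciu ($\bigcup_{i\le l}\Sigma^i=\bigcup_{i\le l}\cV^i$) as a black box, whereas you compute $H^*(M,L_\rho)$ directly by splitting the equivariant cellular chain complex into $C_\bullet(\widetilde{T^n})\oplus(\Lambda^q\to\Lambda)$ and observing that the first summand is $\rho$-acyclic for $\rho\ne\mathds 1$. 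This is cleaner and self-contained, and it gives the same refined conclusion (whether $\mathds 1$ lies in $\Sigma^k$ is governed by $H^k(T^n;\bC)$, i.e.\ by whether $k\le n$). One phrasing caveat: the preimage in $\widetilde Y$ of the wedge $k$-sphere is a \emph{disjoint} union of spheres attached at different lattice points, so a connected attaching map cannot literally ``lie inside the sub-wedge.'' The splitting you want is nevertheless attainable: the component of $\partial e^{k+1}$ landing in $C_k(\bR^n)$ is automatically a boundary (since $H_k(\bR^n)=0$), so a change of the degree-$(k+1)$ basis vector by an element of $C_{k+1}(\bR^n)$ kills it. Worth stating that way rather than as a constraint on the attaching map itself.

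For $k=1$ your presentation-complex plus Fox-calculus argument is the topological mirror image of the paper's group-theoretic argument via $\TFM(G)$ and Lemma 2.3/Corollary 2.5. The key points agree: the Fox gradient restricted to $[F_n,F_n]$ is a $\Lambda$-module homomorphism onto the Koszul syzygy module $K=\ker(\Lambda^n\to\Lambda)$, conjugation acts as the deck action, and adding relators in $[F_n,F_n]$ generating $I\cdot K$ produces $B=K\otimes_\Lambda\Lambda/I$ with support $Z$ (using $\Supp K=(\bC^*)^n$ for $n\ge 2$). The paper's route through the torsion-free metabelianization is equivalent but phrased group-theoretically; either computation of $\dim H^1(M,L_\rho)=\dim_\bC(B\otimes_\Lambda\bC_\rho)$ for $\rho\ne\mathds 1$ is fine. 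You are right to flag $n=k=1$ as a genuinely separate case (there $K=0$, so the argument collapses), and in fact the paper's own proof of Proposition 3.3 tacitly needs $n\ge 2$ for the rank-$(n-1)$ step; however your proposed fix via ascending HNN extensions is only gestured at, and it is not obvious that one can realize \emph{every} $\bZ$-subvariety of $\bC^*$ (in particular $Z=\bC^*$, or $Z$ a Galois orbit not satisfying the Alexander-polynomial constraints $f(1)=\pm1$) while keeping $G^{\mathrm{ab}}=\bZ$ exactly. That case needs an actual construction, not a pointer.
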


We prove the theorem in the rest of this note. The proof is divided into two parts, $k=1$ and $k\geq 2$. When $k=1$, this is a group theoretic problem, and is essentially known (for example, \cite{SYZ} Lemma 10.3). For completeness, we include the proof still. In fact, the examples for the case $k\geq 2$ are inspired by the case $k=1$. 

When $k\geq 2$ and $n$ is even, and when $Z$ is not a union of torsion translates of subtori, our examples are homotopy $(k-1)$-equivalent to an abelian variety, but not homotopy $k$-equivalent to any quasi-projective variety. This shows that the result of \cite{BW} puts genuine higher homotopy obstruction to the possible homotopy types of quasi-projective varieties. 

Carlos Simpson has kindly pointed out to us that the construction for $k\geq 2$ has appeared in \cite{S2}. Thus, this note is a self contained proof of some essentially known result. 

\section{Constructing the examples for $k\geq 2$}
We assume $k\geq 2$ though out this section. 

Under the notation of Theorem \ref{main}, we start with a real torus $M_0=(S^1)^n$. Let $N_0$ be the universal cover of $M_0$, and let the covering map be $p_0: N_0\to M_0$. Fixing an origin $O\in M_0$, we attach a $k$-sphere $S^k$ to $M_0$ at $O$, obtaining a new space $M_1$. In other words, $M_1$ is the wedge sum of $M_0$ and $S^k$. We call such a $k$-sphere. Let $p_1: N_1\to M_1$ be the universal covering map. Then $N_1$ is obtained from $N_0$ by attaching infinitely many $k$-spheres parametrized by $\bZ^n$. Denote the $k$-sphere in $N_1$ corresponding to $J\in \bZ^n$ by $B_J$.

Suppose the subvariety $Z$ of $(\bC^*)^n$ is defined by Laurent polynomials \\$f_i(x_1, x_1^{-1}, \ldots, x_n, x_n^{-1})$, for $1\leq i\leq r$, where $x_j$'s are the coordinates of $(\bC^*)^n$. Suppose $f_i(x_1, \ldots, x_n)=\sum_{J\in \Lambda_i} a_i^J x^J$, where $\Lambda_i$ is a finite subset of $\bZ^n$, $a_i^J$ are integers and $x^J$ is the product of $x_j$'s under the usual multi-index notation. For example, if $n=2$ and $J=(-2, 3)$, then $x^J=x_1^{-2}x_2^3$. 

The CW complex $M$ satisfying the property of Theorem \ref{main} will be obtained by gluing $r$ $(k+1)$-cells to $M_1$ corresponding to the Laurent polynomials $f_1, \ldots, f_r$. To explain how to attach the $(k+1)$-cell corresponding to one Laurent polynomial $f_i$, it is easier to work on $N_1$. Fix a $J_0\in \bZ^n$. We can attach a $(k+1)$-cell $e^{k+1}_{J_0}$ to $N_1$, such that $\partial e^{k+1}_{J_0}$ represents the cycle $\sum_{J\in \Lambda_1} a_i^J B_{J_0+J}$ in $H_k(N_1, \bZ)$. Denote the new space by $N_{2, J_0}$. Notice that $N_1$ is $(k-1)$-connected. Hence $N_{2, J_0}$ is uniquely determined up to homotopy. Define $N_2$ to be the coproduct of $N_{2, J_0}$ over $N_1$ for all $J_0\in \bZ^n$. Then $N_2$ is obtained from $N_1$ by attaching infinitely many $(k+1)$-cells parametrized by $\bZ^n$. Suppose we attach these $(k+1)$-cells in a compatible way. Then the Galois action on $N_1$ by $\bZ^n$ extends to $N_2$. Now, let $M_2$ be the quotient of $N_2$ by the Galois action $\bZ^n$. By our construction, $M_2$ is obtained by attaching one $(k+1)$-cell to $M_1$. In the same way, we can attach more $(k+1)$-cells corresponding to other Laurent polynomials $f_2, \ldots, f_r$. Let $M$ be the space we obtained this way, and let $N$ be the cover of $M$ with Galois group $\bZ^n$. Denote the covering map by $p: M\to N$. 

By our construction, there is a natural isomorphism $\pi_1(M)\cong \pi_1(M_0)$. Since $M_0=(S^1)^n$, there is a natural isomorphism $L(M)\cong (\bC^*)^n$, and the isomorphism is induced by an isomorphism of the underlying scheme over $\bZ$. 

\begin{prop}
Under the above isomorphism, we have the following. 
\begin{enumerate}
\item $\Sigma^i(M)=\{\mathds 1\}$ for $0\leq i\leq \min\{n, k-1\}$, $\Sigma^i(M)=\emptyset$ for $n<i\leq k-1$. 
\item $\Sigma^k(M)=Z\cup \{\mathds 1\}$ when $k\leq n$, and $\Sigma^k(M)=Z$ when $k>n$. 
\item $M$ is homotopy $(k-1)$-equivalent to the real torus $(S^1)^n$. 
\item $M$ is not homotopy $k$-equivalent to any quasi-projective variety. 
\end{enumerate}
\end{prop}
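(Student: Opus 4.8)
The plan is to compute every group $H^i(M;L_\rho)$ directly from the cellular chain complex of the universal cover, using that $k\ge 2$ makes $\widetilde M=N$ simply connected. Write $\Lambda=\bZ[\pi_1(M)]=\bZ[\bZ^n]=\bZ[t_1^{\pm 1},\dots,t_n^{\pm 1}]$. Since $M$ is a finite CW complex, $C_\bullet(N)$ is a bounded complex of finitely generated free $\Lambda$-modules; for $\rho\in L(M)=(\bC^*)^n$ the complex $\Hom_\Lambda(C_\bullet(N),\bC_\rho)$, with $\bC_\rho$ the one-dimensional $\Lambda$-module attached to $\rho$, computes $H^\bullet(M;L_\rho)$, and dualizing we get $\dim_\bC H^i(M;L_\rho)=\dim_\bC H_i\big(C_\bullet(N)\otimes_\Lambda\bC_\rho\big)$. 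The first and main task is to identify $C_\bullet(N)$. The part coming from $N_0=\bR^n$ is the Koszul complex $K_\bullet$ on $(t_1-1,\dots,t_n-1)$. Each attached $k$-sphere $B_J$ contributes one free generator in degree $k$ with zero differential (the attaching map is constant and $k\ge 2$), so $N_1$ is $(k-1)$-connected with $H_k(N_1)\cong\Lambda$ generated by $B_0$ and $B_J=t^J B_0$; by Hurewicz $\pi_k(N_1)\cong\Lambda$, so each $(k+1)$-cell $e^{k+1}_{i,J_0}$ can be attached with cellular boundary exactly $\sum_{J\in\Lambda_i}a_i^J B_{J_0+J}=t^{J_0}f_i(t)\cdot B_0$. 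Checking that these cells create no differentials into $K_\bullet$, one obtains a splitting of complexes of $\Lambda$-modules
\[
  C_\bullet(N)\;\cong\;K_\bullet\;\oplus\;D_\bullet,\qquad
  D_\bullet=\big(\Lambda^r\xrightarrow{(f_1(t),\dots,f_r(t))}\Lambda\big),
\]
with $\Lambda^r$ placed in degree $k+1$ and $\Lambda$ in degree $k$.

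Granting this, parts (1) and (2) reduce to linear algebra over $\bC$. Tensoring with $\bC_\rho$, the Koszul factor becomes the Koszul complex of $(\rho_1-1,\dots,\rho_n-1)$, which is acyclic unless $\rho=\mathds{1}$, in which case its $i$-th homology is $\bC^{\binom ni}$; the factor $D_\bullet\otimes_\Lambda\bC_\rho$ is $\bC^r\to\bC$ with matrix $(f_1(\rho),\dots,f_r(\rho))$, whose homology lies in degrees $k$ and $k+1$ and is nonzero in degree $k$ precisely when all $f_i(\rho)=0$, i.e.\ when $\rho\in Z$. Adding the two contributions: for $i\le k-1$, $\dim_\bC H^i(M;L_\rho)$ equals $\binom ni$ when $\rho=\mathds{1}$ and $0$ otherwise, which gives (1) once one separates $i\le n$ from $i>n$; and $\dim_\bC H^k(M;L_\rho)\ge 1$ exactly when $\rho\in Z$ or ($\rho=\mathds{1}$ and $k\le n$), which gives (2). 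The regimes $k<n$, $k=n$, $k>n$ differ only in where $K_\bullet$ and $D_\bullet$ overlap in degree, and the splitting handles them uniformly.

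Part (3) is then formal: $M$ is obtained from $M_0=(S^1)^n$ by attaching cells of dimension $\ge k$ only (the $k$-sphere and the $(k+1)$-cells), so the pair $(M,M_0)$ is $(k-1)$-connected and the inclusion $(S^1)^n\hookrightarrow M$ is a homotopy $(k-1)$-equivalence. For (4) I would assume, as in the introduction, that $Z$ --- equivalently, by (2), $\Sigma^k(M)$ --- is not a finite union of torsion translates of subtori (adjoining the single point $\mathds{1}$ to the closed set $Z$ cannot create such a union unless $Z$ already was one). If $M$ were homotopy $k$-equivalent to a quasi-projective variety $U$, then $M$ and $U$ have isomorphic $\pi_1$, so $L(M)\cong L(U)$ over $\bZ$, and isomorphic $H^i(-;\mathcal L)$ for all $i\le k$ and all rank one $\mathcal L$ (cohomology in degrees $\le k$ with local coefficients depends only on the homotopy $k$-type, since $X\to\tau_{\le k}X$ is a $(k{+}1)$-equivalence); hence $\Sigma^k(U)=\Sigma^k(M)$, contradicting the theorem of \cite{S1}, \cite{BW} that the cohomology jump loci of a quasi-projective variety are finite unions of torsion translates of subtori.

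The hard part is the first step --- pinning down $C_\bullet(N)$: verifying that $\partial e^{k+1}_{i,J_0}$ is multiplication by $f_i$ under $H_k(N_1)\cong\Lambda$, and that $K_\bullet$ splits off as a genuine direct summand of complexes of $\Lambda$-modules, with no stray boundary maps linking the torus cells, the attached spheres and the $(k+1)$-cells. Everything downstream --- the Koszul computation, the bookkeeping of the small cases, and the formal arguments for (3) and (4) --- is then routine.
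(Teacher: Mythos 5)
Your proof is correct, but it takes a genuinely different and more computational route than the paper. The paper proves (2) by invoking the Papadima--Suciu theorem (their Theorem 3.6) as a black box to reduce $\Sigma^i(M)$ to the supports $\cV^i(M)$ of the Alexander modules, and then identifies $H_k(N,\bZ)\cong\Lambda/(f_1,\dots,f_r)$ via a Mayer--Vietoris argument left to the reader; the point $\mathds{1}$ is then checked separately. You instead identify the entire $\Lambda$-equivariant cellular chain complex $C_\bullet(N)$ and split it as $K_\bullet\oplus D_\bullet$, a Koszul complex plus the two-term complex $\Lambda^r\to\Lambda$ in degrees $k+1,k$. This buys you the exact dimensions $\dim H^i(M,L_\rho)$ for every $\rho$ and every $i$ in one stroke, handles the cases $k\le n$ versus $k>n$ and the special point $\mathds{1}$ uniformly, and sidesteps the Papadima--Suciu machinery entirely, making the argument self-contained. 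You are also more careful than the paper on (4): you state explicitly the hypothesis (implicit in the paper's introduction) that $Z$ is not a finite union of torsion translates of subtori, without which (4) as literally stated would be false, and you spell out the homotopy $k$-equivalence invariance of $\Sigma^{\le k}$ that the paper only alludes to.

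The one place deserving a little more care in your writeup is the claim that $C_\bullet(N)$ splits as a direct sum of $\Lambda$-chain complexes. The cellular boundary of a lifted $(k+1)$-cell is \emph{a priori} a cycle in $C_k(N_1)$ whose class in $H_k(N_1)\cong\Lambda$ is $t^{J_0}f_i$, but the chain itself could have a component in $C_k(N_0)$. Since that component is a cycle in the contractible $N_0$, it is a boundary $\partial b'$, and one fixes the splitting by the equivariant change of basis $e^{k+1}\mapsto e^{k+1}-b'$ (or simply by choosing the attaching maps to land in the wedge of $k$-spheres). You flag this as something to be checked; it is worth saying the half-sentence of why it works, since the rest of the argument rests on that direct-sum decomposition.
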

The statements (1) and (3) are obvious from the construction. Moreover, (4) follows from (2) and \cite{BW}. To prove this, one has to argue that the cohomology jump locus $\Sigma^i$ is a homotopy $i$-equivalence invariant. We leave this to the reader. We will prove statement (2) in the rest of this section. 

We will compute the cohomology jump loci of $M$ via Alexander modules. Recall that $p: N\to M$ is the covering map with Galois action by $\bZ^n$. Then in a natural way, the homology groups $H_i(N, \bZ)$ become $\bZ^n$-modules. Notice that $\bZ^n$ is naturally identified with $H_1(M, \bZ)$, and in a natural way $L(M)=\Hom(H_1(M, \bZ), \bC^*)\cong (\bC^*)^n$. Thus, the group ring of $\bZ^n$ is naturally identified with $\bZ[x_1, x_1^{-1}, \ldots, x_n, x_n^{-1}]$, which is the coordinate ring of the underlying $\bZ$-scheme of $(\bC^*)^n$. As $\bZ^n$-module, $H_i(N, \bZ)$ has now a natural $\bZ[x_1, x_1^{-1}, \ldots, x_n, x_n^{-1}]$-module structure. These $H_i(N, \bZ)$, together with the $\bZ[x_1, x_1^{-1}, \ldots, x_n, x_n^{-1}]$-module structures are called Alexander modules. 

Denote the support of the Alexander modules $H_i(N, \bZ)$ in $(\bC^*)^n$ by $\cV^i(M)$. Then the cohomology jump loci and the support of the Alexander modules are closely related by the following theorem of Papadima and Suciu.

\begin{thm}[\cite{PS} Theorem 3.6]\label{psthm}
$$\bigcup_{i=0}^l \Sigma^i(M)=\bigcup_{i=0}^l \cV^i(M)$$
for any integer $l\geq 0$. 
\end{thm}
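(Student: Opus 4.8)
\medskip
\noindent\emph{Proof sketch.} The plan is to reduce the statement to an algebraic fact about the perfect complex that computes the Alexander invariants, and then to prove that fact by a truncation argument over local rings. Put $A=\bC[\bZ^n]=\cO((\bC^*)^n)$ and let $C_*=C_*(N,\bZ)\otimes_\bZ\bC$ be the cellular chain complex of the $\bZ^n$-cover $N\to M$, viewed as a complex of $A$-modules via the deck action. Since $M$ is a finite CW complex, $C_*$ is a bounded complex of finitely generated free $A$-modules concentrated in degrees $\geq 0$. Since $\bC$ is flat over $\bZ$, $H_i(C_*)=H_i(N,\bC)$; and since each $H_i(N,\bZ)$ is finitely generated over the Noetherian ring $\bZ[\bZ^n]$ and formation of support commutes with base change for finitely generated modules, $\cV^i(M)=\Supp_{(\bC^*)^n}H_i(C_*)$. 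On the other hand, a rank one local system $L_\rho$ has abelian monodromy, so it factors through $H_1(M,\bZ)=\bZ^n$ and $C^*(M,L_\rho)=\Hom_A(C_*,\bC_\rho)$, where $\bC_\rho=A/\mathfrak m_\rho$ denotes the residue field at $\rho$. Because each $C_i$ is free of finite rank, this cochain complex is the $\bC$-linear dual of $C_*\otimes_A\bC_\rho$, so universal coefficients over the field $\bC$ gives $H^i(M,L_\rho)\neq 0$ if and only if $H_i(C_*\otimes_A\bC_\rho)\neq 0$. (Matching the monodromy convention with the deck action may introduce the automorphism $\rho\mapsto\rho^{-1}$ of $(\bC^*)^n$; this is harmless for what follows, so I suppress it.) Hence $\Sigma^i(M)=\{\rho\in(\bC^*)^n:H_i(C_*\otimes_A\bC_\rho)\neq 0\}$, and Theorem~\ref{psthm} becomes the equality
\[
\bigcup_{i=0}^{l}\Supp H_i(C_*)=\bigcup_{i=0}^{l}\bigl\{\,\rho\in(\bC^*)^n:H_i(C_*\otimes_A\bC_\rho)\neq 0\,\bigr\}.
\]

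Both sides are closed (supports are closed, and $\rho\mapsto\dim_\bC H_i(C_*\otimes_A\bC_\rho)$ is upper semicontinuous), so it suffices to compare closed points. Fixing $\rho$ and localizing at $\mathfrak m_\rho$, and using that localization is exact (so $H_i(C_*)_\rho=H_i((C_*)_\rho)$) while $(C_*)_\rho\otimes_{A_\rho}\bC_\rho=C_*\otimes_A\bC_\rho$, one is reduced to the following purely local claim: if $A$ is a Noetherian local ring with residue field $\kappa$ and $C_*$ is a bounded complex of finitely generated free $A$-modules in degrees $\geq 0$, then $H_i(C_*)\neq 0$ for some $i\leq l$ if and only if $H_i(C_*\otimes_A\kappa)\neq 0$ for some $i\leq l$.

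For the claim, first suppose $H_i(C_*)=0$ for all $i\leq l$ and set $Z=\ker(d_{l+1}\colon C_{l+1}\to C_l)$. This vanishing says precisely that
\[
0\lra Z\lra C_{l+1}\lra C_l\lra\cdots\lra C_0\lra 0
\]
is exact. Splitting it into short exact sequences from the bottom and using that a finitely generated projective module over a local ring is free, every kernel occurring is free and every short exact sequence splits; in particular $Z$ is free and the displayed sequence stays exact after $-\otimes_A\kappa$. Reading off $H_i(C_*\otimes_A\kappa)$ for $i\leq l$ from the base-changed sequence gives $H_i(C_*\otimes_A\kappa)=0$ for all $i\leq l$. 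Conversely, let $i_0\leq l$ be minimal with $H_{i_0}(C_*)\neq 0$. The same argument applied to $0\to Z'\to C_{i_0}\to\cdots\to C_0\to 0$, with $Z'=\ker d_{i_0}$, shows that $Z'$ is free and a direct summand of $C_{i_0}$. Now $H_{i_0}(C_*)=Z'/\im(d_{i_0+1})=\operatorname{coker}(C_{i_0+1}\to Z')$ is a nonzero finitely generated $A$-module, so by Nakayama $\operatorname{coker}(C_{i_0+1}\otimes_A\kappa\to Z'\otimes_A\kappa)\neq 0$. Since $Z'\otimes_A\kappa$ is a direct summand of $C_{i_0}\otimes_A\kappa$ contained in $\ker(d_{i_0}\otimes\kappa)$ and containing $\im(d_{i_0+1}\otimes\kappa)$, this nonzero cokernel is a submodule of $H_{i_0}(C_*\otimes_A\kappa)$, so $H_{i_0}(C_*\otimes_A\kappa)\neq 0$ with $i_0\leq l$. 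This proves the claim, hence the theorem.

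I expect the local claim to be the main point, and within it the observation that a bounded exact complex of finitely generated free modules over a local ring is split — this is exactly what makes the relevant truncation survive base change to the residue field — together with the single use of Nakayama. The topological preliminaries (identifying twisted cohomology of $M$ with the homology of the specialized chain complex $C_*\otimes_A\bC_\rho$, and the Alexander modules with $H_*(C_*)$) are routine once the finiteness of $M$ guarantees that $C_*$ is a bounded complex of finitely generated frees. One can equally phrase the whole argument in $D^b(\mathrm{Coh}((\bC^*)^n))$: there $C_*$ is a perfect complex, the $\cV^i(M)$ are the supports of its cohomology sheaves, the $\Sigma^i(M)$ are the loci where its derived restriction to a point is nonzero in the relevant degree, and the claim says that these partial supports are unchanged under derived restriction.
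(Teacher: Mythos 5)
Your proof is correct. A point worth making up front: the paper itself does not prove this statement — it cites it as Theorem 3.6 of Papadima--Suciu~\cite{PS} and uses it as a black box — so there is no internal proof to compare against. What you have written is a clean, self-contained argument, and the route you take (identify both sides with invariants of the perfect complex $C_*=C_*(N,\bC)$, reduce to a pointwise statement, localize, and finish with the splitting-of-exact-free-complexes lemma plus Nakayama) is the natural homological-algebra proof of this kind of ``support equals jump locus through degree $l$'' statement, and is in the same spirit as the argument in~\cite{PS}.

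Two small remarks on the write-up. First, the ``both sides are closed, so it suffices to compare closed points'' sentence is doing a bit more than needed: since $\Sigma^i(M)\subset L(M)$ is by definition a set of $\bC$-points and $(\bC^*)^n$ is Jacobson, one can simply compare closed points directly; the upper semicontinuity is useful if one additionally wants to see that $\Sigma^i(M)$ is Zariski closed, but it is not logically required for the equality of the two unions. Second, you correctly flag the possible $\rho\mapsto\rho^{-1}$ discrepancy between the deck action and the monodromy convention; with the identifications of $\bZ[\bZ^n]$ with the coordinate ring of $(\bC^*)^n$ and of $L(M)$ with $(\bC^*)^n$ made compatibly (as the paper does implicitly), no inversion appears, so the equality is exactly as stated. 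The heart of your argument — that over a Noetherian local ring a bounded exact complex of finitely generated free modules is split, so truncations survive base change, and that Nakayama promotes nonvanishing of the first nonzero $H_{i_0}(C_*)$ to nonvanishing of $H_{i_0}(C_*\otimes\kappa)$ via the free direct summand $Z'=\ker d_{i_0}$ — is carried out correctly, including the containment $\im(d_{i_0+1}\otimes\kappa)\subseteq Z'\otimes\kappa\subseteq\ker(d_{i_0}\otimes\kappa)$ that makes the Nakayama cokernel a subspace of $H_{i_0}(C_*\otimes\kappa)$.
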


Since $M$ is homotopy $(k-1)$-equivalent to the real torus $(S^1)^n$, $N$ is homotopy $(k-1)$-equivalent to a point. Therefore, $\cV^0(M)=\{\mathds 1\}$ and $\cV^i(M)=\emptyset$ for $1\leq i\leq k-1$. According to Theorem \ref{psthm}, $\Sigma^i(M)\subset \{\mathds 1\}$. 

By construction, $H_k(N_1, \bZ)$ is a free $\bZ[x_1, x_1^{-1}, \ldots, x_n, x_n^{-1}]$-module of rank one. Recall that $N$ is obtained from $N_1$ by attaching many $(k+1)$-cells. Each $(k+1)$ gives a relation in $H_k(N_1, \bZ)$ as $\bZ$-module. After attaching infinitely many cells, $H_k(N_2, \bZ)$ is isomorphic to $\bZ[x_1, x_1^{-1}, \ldots, x_n, x_n^{-1}]/(f_1)$ as $\bZ[x_1, x_1^{-1}, \ldots, x_n, x_n^{-1}]$-modules. This can be proved by a standard Mayer-Vietoris sequence argument. We leave this to the reader. Similarly, $H_k(N, \bZ)\cong \bZ[x_1, x_1^{-1}, \ldots, x_n, x_n^{-1}]/(f_1, \ldots, f_r)$. Thus, $\cV^k(M)=Z$. Now, according to Theorem \ref{psthm}, $\Sigma^k(M)\cup \{\mathds 1\}=Z\cup \{\mathds 1\}$. 

To check whether $\mathds 1\in \Sigma^k(M)$ is to compute whether $H^k(M, \bC)=0$. This can be done directly from the construction. In fact, $H^k(M, \bC)\neq 0$ when $k\leq n$ and $H^k(M, \bC)=0$ when $k>n$. Thus we have proved the proposition. 

\begin{rem}
One can try to use the same construction for $k=1$. It works except that the circle (or 1-shpere) we attach may create extra elements in $H_1(M, \bZ)$. In fact, it works when $Z$ does not contain any torsion point. 
\end{rem}

\section{Group theoretic first cohomology jump loci}
It is an easy and well known fact that for a topological space $M$ and $\rho\in L(M)$, $H^1(M, L_\rho)\cong H^1(\pi_1(M), \bC_\rho)$. Here the representation $\rho: \pi_1(M)\to \bC^*$ gives $\bC$ a $\pi_1(M)$-module structure, and we emphasize the $\pi_1(M)$-module structure on $\bC$ by writing $\bC_\rho$. Therefore, the first cohomology jump loci of $M$ only depend on $\pi_1(M)$. 

\begin{df}
Let $G$ be a finitely presented group. We denote the character variety $\Hom(G, \bC^*)$ by $L(G)$, and define the group theoretic cohomology jump loci $\Sigma^i(G)=\{\rho\in L(G)\;|\; H^i(G, \bC_\rho)\neq 0\}$. 
\end{df}

The isomorphism between the cohomology of local system and the group cohomology shows that the natural isomorphism $L(M)\cong L(\pi_1(M))$ induces an isomorphism between subvarieties $\Sigma^1(M)\cong \Sigma^1(\pi_1(M))$. Given a finitely presented group $G$, there is a standard process to construct a finite CW complex with fundamental group $G$. Therefore, the case $k=1$ of Theorem \ref{main} is equivalent to the following.

\begin{prop}\label{gp}
Let $Z$ be any (not necessarily irreducible) subvariety of $(\bC^*)^n$ defined over $\bZ$. There is a finitely presented group $G$ such that $L(G)=(\bC^*)^n$ and $\Sigma^1(G)=Z\cup \{\mathds 1\}$. 
\end{prop}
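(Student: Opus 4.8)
The plan is to reduce the statement to a question about Alexander modules and then build $G$ essentially by hand. First, since $H^1(M,L_\rho)\cong H^1(\pi_1(M),\bC_\rho)$ the locus $\Sigma^1$ depends only on the group, and by Theorem~\ref{psthm} with $l=1$, together with $\Sigma^0(G)=\cV^0(G)=\{\mathds{1}\}$, one gets $\Sigma^1(G)\cup\{\mathds{1}\}=\cV^1(G)\cup\{\mathds{1}\}$; as $b_1(G)=n\ge 1$ forces $\mathds{1}\in\Sigma^1(G)$, this reads $\Sigma^1(G)=\cV^1(G)\cup\{\mathds{1}\}$. Here $\cV^1(G)$ is the support, inside $L(G)$, of the Alexander module $H_1(\widetilde{G};\bZ)$, where $\widetilde{G}\to BG$ is the cover with deck group $G^{\mathrm{ab}}$ and $H_1(\widetilde{G};\bZ)$ is regarded as a module over $\Lambda:=\bZ[G^{\mathrm{ab}}]$. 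So it suffices to construct a finitely presented $G$ with $G^{\mathrm{ab}}\cong\bZ^n$ — whence $L(G)\cong(\bC^*)^n$ as $\bZ$-schemes — whose Alexander module is supported exactly on $Z\cup\{\mathds{1}\}$.

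Because the conclusion ignores the point $\mathds{1}$, I would first enlarge $Z$ to $Z\cup\{\mathds{1}\}$ and write it as $V(f_1,\dots,f_r)$ with $f_i\in\bZ[x_1^{\pm1},\dots,x_n^{\pm1}]$; since $\mathds{1}$ lies on the variety, each $f_i$ is in the augmentation ideal $\mathfrak m=(x_1-1,\dots,x_n-1)$. Set $W:=\Lambda/(f_1,\dots,f_r)$, a finitely generated $\Lambda$-module with $\Supp W=Z\cup\{\mathds{1}\}$ and $W/\mathfrak m W\cong\bZ$. I would then take $G$ to realize an extension $1\to W\to G\to\bZ^n\to 1$ in which $\bZ^n$ acts on $W$ through its $\Lambda$-module structure; concretely $G$ can be built as $\pi_1$ of the finite complex obtained from the $n$-torus by wedging on one circle and attaching finitely many $2$-cells, one $2$-cell encoding each $f_i$ (as in the $k\ge 2$ construction above), together with one further $2$-cell of ``Heisenberg type''. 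For such an extension $[G,G]$ is generated by $\mathfrak m W$ and the commutator pairing $\wedge^2\bZ^n\to W$; choosing the extension class so that this pairing carries $e_1\wedge e_2$ to a cyclic generator $w_0$ of $W$ makes $[G,G]=\mathfrak m W+\Lambda\!\cdot\! w_0=W$, hence $G^{\mathrm{ab}}=\bZ^n$, while $H_1(\widetilde{G};\bZ)=[G,G]^{\mathrm{ab}}=W$ with exactly the prescribed $\Lambda$-action. Finite presentability is automatic from the finite-complex description.

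The step I expect to be the real obstacle is precisely forcing $G^{\mathrm{ab}}$ to be $\bZ^n$ with no spurious free summand or torsion. Every $2$-cell attached contributes, in $H_1$ of the abelian cover, a relation supported on $Z\ni\mathds{1}$, so its contribution evaluated at $\mathds{1}$ vanishes; thus the extra circle cannot be killed in $H_1$ by any honest relation, and one is forced to entangle it with a commutator of the toric generators — which is possible only because $H_2(\bZ^n;\bZ)=\wedge^2\bZ^n$ is nonzero, i.e.\ because $n\ge 2$. The case $n=1$ (where $\wedge^2\bZ^n=0$) I would treat separately, realizing $Z$ itself rather than $Z\cup\{\mathds{1}\}$ by a Baumslag–Solitar-type group $\langle a,t\mid tat^{-1}=a^{m}\rangle$ and its variants. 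Once $W$ and the extension class are fixed, identifying $\cV^1(G)=\Supp W$ and concluding $\Sigma^1(G)=Z\cup\{\mathds{1}\}$ is a routine application of Theorem~\ref{psthm}.
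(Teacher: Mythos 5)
Your first step --- reducing to Alexander modules via Theorem~\ref{psthm}/Corollary~\ref{lcor} --- is the same as the paper's, but the module you then try to realize is different, and that is where the proposal breaks. The paper takes $G_0=F_n$ so that $\Alex(G_0)=H_1(N,\bZ)$ for the integral net $N\subset\bR^n$: a $\Lambda$-module of rank $n-1$ generated by the $\binom{n}{2}$ squares $\gamma_{ab}$ subject to the net relations. Because this module already has full support on $(\bC^*)^n$, quotienting by $I\cdot\Alex(G_0)$ with $I=(f_1,\dots,f_r)$ gives a module supported exactly on $Z$; no extra loop and no Heisenberg cell are needed, at the cost of imposing $r\binom{n}{2}$ relations rather than $r+1$.

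Your plan of forcing the cyclic module $W=\Lambda/(f_1,\dots,f_r)$ to be the Alexander module via one extra loop $h$ plus a Heisenberg relation does not survive inspection of the space you describe. Starting from $T^n\vee S^1$, the $2$-cells of $T^n$ already impose $[g_1,g_2]=1$ in $\pi_1$, so the ``Heisenberg'' cell $h=[g_1,g_2]$ simply kills $h$ and trivializes $W$. Starting instead from a wedge of $n+1$ circles, the $r+1$ cells you attach leave all the other commutators $[g_a,g_b]$ alive in $[G,G]$, so the Alexander module is not cyclic and its support is generically the whole torus, not $Z$. And if one tries to rescue cyclicity by also killing $[g_a,g_b]$ for $\{a,b\}\neq\{1,2\}$, the net relation $(x_a-1)\gamma_{bc}+(x_b-1)\gamma_{ca}+(x_c-1)\gamma_{ab}=0$ specializes to $(x_c-1)\gamma_{12}=0$ for all $c\ge 3$, so $\Supp$ collapses to $Z\cap\{x_3=\dots=x_n=1\}$. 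The remaining escape --- choosing an abstract extension class in $H^2(\bZ^n;W)$ with surjective commutator pairing onto $W/\mathfrak m W\cong\bZ$ and declaring $G$ to be that extension --- avoids these contradictions, but then the assertion that ``finite presentability is automatic from the finite-complex description'' no longer applies, since you no longer have a finite complex; finitely generated metabelian groups need not be finitely presented (this is exactly the Bieri--Strebel phenomenon), so this would require a genuine argument. The missing idea is the paper's: do not aim for a cyclic Alexander module; use the free group's Alexander module, which already has full support, and quotient by the ideal $I$ acting on it.
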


Before proving the Proposition, we give an algorithm to compute the first group theoretic cohomology jump loci, which is slightly different from \cite{SYZ}. 

$H^1(G, \bC_\rho)$ can be computed by the quotient of 1-cycles by 1-boundaries, i.e., $H^1(G, \bC_\rho)=Z^1(G, \bC_\rho)/B^1(G, \bC_\rho)$, where
$$Z^1(G, \bC_\rho)=\{\tau\in \Hom_{\textsl{set}}(G, \bC)\,|\, \tau(ab)=\rho(a)\tau(b)+\tau(a)\;\textsl{for any}\; a, b\in G\}$$
 and
 $$B^1(G, \bC_\rho)=\{\tau\in \Hom_{\textsl{set}}(G, \bC)\,|\, \tau(a)=\rho(a)\tau(1)-\tau(1) \textsl{for any}\; a\in G\}.$$
 Denote the commutator subgroup of $G$ by $G'$ and denote the abelianization of $G$ by $\Ab(G)$, then a straightforward computation shows the following. 

\begin{lem}\label{mle}
When $\rho\neq \mathds 1$, 
$$H^1(G, \bC_\rho)\cong \{\tau\in \Hom(G', \bC)\,|\, \tau(sa)=\rho(s)\tau(a) \;\textsl{for any}\; s\in G, a\in G'\}.$$
\end{lem}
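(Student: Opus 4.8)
I want to identify $H^1(G,\bC_\rho)$ with a subspace of $\Hom_{\mathbf{set}}(G,\bC)$ consisting of crossed homomorphisms, modulo the principal ones, and then show that for $\rho\neq\mathds 1$ the map $\tau\mapsto\tau|_{G'}$ induces the claimed isomorphism. First I would record, directly from the cocycle condition $\tau(ab)=\rho(a)\tau(b)+\tau(a)$, the normalization $\tau(1)=0$ for any $\tau\in Z^1$: indeed putting $a=b=1$ gives $\tau(1)=\rho(1)\tau(1)+\tau(1)=2\tau(1)$. (Note the stated $B^1$ is then the set of $\tau$ with $\tau(a)=\rho(a)\tau(1)-\tau(1)$; a principal cocycle is determined by a single value $c=-\tau(1)\in\bC$ via $\tau(a)=c-\rho(a)c$.)

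\textbf{Step 1: $\tau\in Z^1$ restricts to a twisted homomorphism on $G'$, and this restriction map is injective when $\rho\neq\mathds 1$.} From the cocycle identity one checks, for $s\in G$ and $a\in G'$, that $\tau(sas^{-1})$ and $\tau(a)$ are related; more precisely, a short computation using $\tau(sas^{-1}s)=\tau(sa)$ expanded two ways, together with $\rho|_{G'}=1$, yields $\tau(sa)=\rho(s)\tau(a)+\tau(s)$ and $\tau(sas^{-1})=\rho(s)\tau(a)$, so $\tau|_{G'}$ indeed lies in the right-hand set. For injectivity modulo $B^1$: if $\tau|_{G'}=0$, then $\tau$ factors through $\Ab(G)$, i.e.\ $\tau$ descends to a map $\bar\tau$ on $\Ab(G)$ satisfying the abelian cocycle relation $\bar\tau(\bar a+\bar b)=\rho(\bar a)\bar\tau(\bar b)+\bar\tau(\bar a)$; since $\rho\neq\mathds 1$ there is $g$ with $\rho(g)\neq1$, and then evaluating the cocycle relation on $g$ and an arbitrary element shows $\bar\tau$ is forced to be the principal cocycle $a\mapsto c-\rho(a)c$ with $c=\bar\tau(g)/(1-\rho(g))$ — hence $\tau\in B^1$. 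This is the "straightforward computation" and I expect it to be the only place requiring care, because one must genuinely use both the twisting and $\rho\neq\mathds 1$.

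\textbf{Step 2: surjectivity of the restriction map.} Given $\tau_0\in\Hom(G',\bC)$ with $\tau_0(sa)=\rho(s)\tau_0(a)$ for all $s\in G$, $a\in G'$, I would extend it to a cocycle on $G$. Pick a set-theoretic section of $G\to\Ab(G)$, i.e.\ coset representatives; any $g\in G$ writes uniquely as $g = t\cdot a$ with $t$ a chosen representative and $a\in G'$. Because $\rho\neq\mathds 1$, fix $g_0$ with $\rho(g_0)\neq 1$ and set $c:=0$ on representatives in a consistent way — concretely, define $\tau(g)=\rho(t)\,\tau_0(a')+(\text{value on }t)$ where the value on representatives is chosen to be the unique principal-type assignment $t\mapsto c_t$ making the cocycle identity hold on $\Ab(G)$; one shows such a choice exists and is unique up to adding a global principal cocycle, precisely because $\rho\neq\mathds 1$ makes $H^1(\Ab(G),\bC_\rho)$ for the abelianized action contribute nothing. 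Then verify $\tau(ab)=\rho(a)\tau(b)+\tau(a)$ on all of $G$ by reducing each product to representative-times-commutator form and invoking the hypothesis on $\tau_0$. Together Steps 1 and 2 give the isomorphism.

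\textbf{Main obstacle.} The genuinely delicate point is bookkeeping in Step 2: the extension of $\tau_0$ from $G'$ to $G$ is not canonical, and one must check that the set-theoretic choices (coset representatives, the auxiliary value on representatives) can be made so that the global cocycle identity holds, using only $\rho|_{G'}=1$ and $\rho\neq\mathds 1$. Everything else — the normalization $\tau(1)=0$, the identities $\tau(sas^{-1})=\rho(s)\tau(a)$, and the injectivity argument — is a direct manipulation of the defining relation. Since the lemma only claims the isomorphism (not naturality or compatibility with the scheme structure), no further structure needs to be tracked.
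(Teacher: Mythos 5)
Your Step 1 is correct: the computation $\tau(sas^{-1})=\rho(s)\tau(a)$, the fact that $\tau|_{G'}$ is additive (because $\rho|_{G'}=\mathds 1$), and the injectivity-mod-$B^1$ argument via descent to $\Ab(G)$ and the formula $c=\bar\tau(g)/(1-\rho(g))$ are all sound. The gap is Step~2, which you flag as the ``main obstacle'' but do not close. Two things go wrong there. First, the vanishing you invoke is the wrong one: $H^1(\Ab(G),\bC_\rho)=0$ governs \emph{uniqueness} of the extension (equivalently injectivity of restriction, which is your Step~1), whereas \emph{existence} of an extension of $\tau_0$ to a cocycle on $G$ is obstructed by a class in $H^2(\Ab(G),\bC_\rho)$. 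That group also vanishes when $\rho\neq\mathds 1$, but this needs its own argument. Second, the verification that the putative extension actually satisfies the cocycle identity on all of $G$ --- which is exactly where the equivariance hypothesis on $\tau_0$ must be used --- is asserted but never carried out, and it is the real content of the lemma.

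Both issues disappear if you make the extension explicit rather than appealing to set-theoretic sections. Fix $g_0\in G$ with $\rho(g_0)\neq 1$. The cocycle relation, applied to $g_0g = gg_0\cdot(g_0^{-1}g^{-1}g_0g)$ with $\tau(g_0)$ normalized to $0$, forces
$$\tau(g)=\frac{\rho(gg_0)\,\tau_0\!\left(g_0^{-1}g^{-1}g_0g\right)}{\rho(g_0)-1},$$
which already reproves your injectivity. For surjectivity, take this as the \emph{definition} of $\tau$ and check: (a) for $a\in G'$, using $\rho(a)=1$ and $\tau_0(g_0^{-1}a^{-1}g_0)=\rho(g_0)^{-1}\tau_0(a^{-1})$, one gets $\tau(a)=\tau_0(a)$; (b) the cocycle identity $\tau(gh)=\rho(g)\tau(h)+\tau(g)$ reduces, after expanding $(gh)^{-1}g_0^{-1}(gh)g_0 = h^{-1}uh\cdot w$ with $u=g^{-1}g_0^{-1}gg_0$ and $w=h^{-1}g_0^{-1}hg_0$ in $G'$, to the equivariance $\tau_0(h^{-1}uh)=\rho(h)^{-1}\tau_0(u)$. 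This is the ``straightforward computation'' the paper leaves implicit; your Step~1 contains its germ, but you must keep the commutator correction term rather than passing to $\Ab(G)$, because on $\Ab(G)$ the extension problem degenerates and loses all information about $\tau_0$.
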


Given a finitely presented group $G$, we define an element $g\in G$ to be in the ``torsion free metabelian kernel", if $g\in G'$, and the image of $g$ in $\Ab(G')$ is torsion. It is very easy to check that the torsion free metabelian kernel forms a normal subgroup. We define the quotient of $G$ by its torsion free metabelian kernel to be the torsion free metabelianization of $G$, denoted by $\TFM(G)$. The following is a direct consequence of the previous lemma. 

\begin{cor}
The first cohomology jump locus of $G$ only depends on $\TFM(G)$. More preciously, the natural isomorphism $L(G)\cong L(\TFM(G))$ induces an isomorphism $\Sigma^1(G)\cong \Sigma^1(\TFM(G))$. 
\end{cor}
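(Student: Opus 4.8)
The plan is to rewrite Lemma~\ref{mle} module-theoretically and then check that the only data it sees --- the abelianization $\Ab(G)$ together with $\Ab(G')$ viewed modulo torsion as a $\bZ[\Ab(G)]$-module --- is unchanged on passing to $\TFM(G)$. First I would verify that the canonical surjection $q\colon G\slra\TFM(G)$ induces an isomorphism $\Ab(G)\cong\Ab(\TFM(G))$: by construction $\ker q$ is contained in $G'$, so the induced surjection $\Ab(G)\slra\Ab(\TFM(G))$ has trivial kernel. Since the character variety of a finitely presented group depends only on its abelianization, this isomorphism induces the natural isomorphism of $\bZ$-schemes $L(G)\cong L(\TFM(G))$ appearing in the statement. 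What remains is to match $\Sigma^1$ on both sides under this identification, and for that it suffices to compare $H^1(G,\bC_\rho)$ with $H^1(\TFM(G),\bC_\rho)$ for every $\rho\in L(G)$.

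The conjugation action of $G$ on $G'$ descends to an action on $\Ab(G')=G'/G''$ on which $G'$ acts trivially, so $\Ab(G')$ is naturally a $\bZ[\Ab(G)]$-module; likewise for $\TFM(G)$. By Lemma~\ref{mle}, for $\rho\neq\mathds 1$ one has $H^1(G,\bC_\rho)\cong\Hom_{\bZ[\Ab(G)]}(\Ab(G'),\bC_\rho)$, where $\bC_\rho$ carries the $\Ab(G)$-action through $\rho$. The key bookkeeping step is to identify the corresponding module for $\TFM(G)$: writing $K=\ker q$ for the torsion-free metabelian kernel, I would check that $\TFM(G)'=G'/K$, that $G''\subseteq K$, and hence that $\Ab(\TFM(G)')=G'/(G''K)$ is exactly the quotient of $\Ab(G')$ by its torsion subgroup (this being precisely what the definition of $K$ encodes), carrying the $\bZ[\Ab(G)]$-module structure induced from $\Ab(G')$ via $q$ and the identification $\Ab(G)\cong\Ab(\TFM(G))$. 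This chase through the various commutator subgroups is the part that demands the most care, although it is entirely elementary.

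To conclude, note that $\bC$ is torsion-free, so every homomorphism $\Ab(G')\to\bC$ kills the torsion subgroup and therefore factors uniquely through $\Ab(G')/\mathrm{tors}=\Ab(\TFM(G)')$; this gives an $\Ab(G)$-equivariant bijection $\Hom(\Ab(\TFM(G)'),\bC)\xrightarrow{\ \sim\ }\Hom(\Ab(G'),\bC)$, which restricts to an isomorphism $\Hom_{\bZ[\Ab(G)]}(\Ab(\TFM(G)'),\bC_\rho)\cong\Hom_{\bZ[\Ab(G)]}(\Ab(G'),\bC_\rho)$ for every $\rho$. By Lemma~\ref{mle} this yields $H^1(\TFM(G),\bC_\rho)\cong H^1(G,\bC_\rho)$ for all $\rho\neq\mathds 1$, so the two characteristic varieties coincide away from $\mathds 1$. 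For the remaining point $\rho=\mathds 1$, which Lemma~\ref{mle} does not cover, one computes directly $H^1(G,\bC)=\Hom(\Ab(G),\bC)\cong\Hom(\Ab(\TFM(G)),\bC)=H^1(\TFM(G),\bC)$ from the isomorphism $\Ab(G)\cong\Ab(\TFM(G))$, so $\mathds 1$ belongs to one locus precisely when it belongs to the other. Putting the two cases together, the natural isomorphism $L(G)\cong L(\TFM(G))$ carries $\Sigma^1(G)$ isomorphically onto $\Sigma^1(\TFM(G))$.
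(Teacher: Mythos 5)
Your proposal is correct and is essentially the argument the paper has in mind when it calls the corollary ``a direct consequence of the previous lemma''; you have simply spelled out the module-theoretic reading of Lemma~\ref{mle} and the bookkeeping with commutator subgroups that the paper leaves implicit. The key steps — observing that $\ker(G\twoheadrightarrow\TFM(G))\subseteq G'$ so abelianizations agree, identifying $\Ab(\TFM(G)')$ with $\Ab(G')/\mathrm{tors}$ as a $\bZ[\Ab(G)]$-module, invoking torsion-freeness of $\bC$ to get $\Hom(\Ab(G'),\bC)\cong\Hom(\Ab(G')/\mathrm{tors},\bC)$ equivariantly, and handling $\rho=\mathds 1$ separately via $H^1(-,\bC)=\Hom(\Ab(-),\bC)$ — are all sound and are exactly what the lemma is set up to deliver.
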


Given a finitely presented group $G$, we define the first Alexander module of $G$ to be the kernel of the natural map $\TFM(G)\to \Ab(G)$, or equivalently the commutator subgroup of $\TFM(G)$. Denote the first Alexander module of $G$ by $\Alex(G)$. Then we have a short exact sequence of groups,
$$0\to \Alex(G)\to \TFM(G)\to \Ab(G)\to 0.$$
This short exact sequence induces an action of $\Ab(G)$ on $\Alex(G)$, which gives $\Alex(G)$ a $\bZ[\Ab(G)]$-module structure. As an analog of Theorem \ref{psthm}, the following follows from Lemma \ref{mle}.

\begin{cor}\label{lcor}
Suppose $b_1(G)>0$. The coordinate ring of the underlying $\bZ$-scheme of $L(G)$ is naturally isomorphic to $\bZ[\Ab(G)]$. Under this isomorphism, 
$$\Sigma^1(G)=\Supp(\Alex(G))\cup \{\mathds 1\}.$$
\end{cor}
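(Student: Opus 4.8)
The plan is to translate Lemma~\ref{mle} into a statement about the $\bZ[\Ab(G)]$-module $\Alex(G)$ and then appeal to standard commutative algebra, treating the trivial character $\mathds 1$ by hand using the hypothesis $b_1(G)>0$.

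For the statement about coordinate rings I would argue as follows. Since $\bC^{*}$ is abelian, $L(G)=\Hom(G,\bC^{*})=\Hom(\Ab(G),\bC^{*})$; writing the finitely generated abelian group $\Ab(G)$ as $\bZ^{b}\oplus T$ with $T$ finite gives $\bZ[\Ab(G)]=\bZ[x_1^{\pm 1},\dots,x_b^{\pm 1}]\otimes_\bZ\bZ[T]$, and $\Spec\bZ[\Ab(G)]$ represents the functor $R\mapsto\Hom(\Ab(G),R^{*})$. In particular its $\bC$-points are canonically $L(G)$, which gives the asserted identification of the coordinate ring of the underlying $\bZ$-scheme with $\bZ[\Ab(G)]$; this step is routine.

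Next I would fix $\rho\neq\mathds 1$ and rewrite $H^1(G,\bC_\rho)$. A homomorphism $\tau\colon G'\to\bC$ automatically kills $G''$ and, since $\bC$ is torsion-free as an abelian group, also kills the torsion subgroup of $G'/G''$; hence it factors uniquely through the maximal torsion-free quotient of $\Ab(G')$, which is precisely $\Alex(G)=\TFM(G)'$. The conjugation action of $G$ on $G'$ descends to an action of $\Ab(G)=G/G'$ on $\Alex(G)$ (the subgroup $G'$ acts trivially), and this is exactly the $\bZ[\Ab(G)]$-module structure used to define $\Supp(\Alex(G))$. Under this dictionary the twisting condition in Lemma~\ref{mle} becomes $\bZ[\Ab(G)]$-linearity of the factored map into $\bC_\rho$ (the module $\bC$ on which $s$ acts by $\rho(s)$), so that $H^1(G,\bC_\rho)\cong\Hom_{\bZ[\Ab(G)]}(\Alex(G),\bC_\rho)$. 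Since $G$ is finitely presented, $\Alex(G)$ is a finitely generated module over the Noetherian ring $\bZ[\Ab(G)]$ — it is a quotient of $G'/G''$, which is finitely generated over $\bZ[\Ab(G)]$ for finitely presented $G$. Base-changing to $\bC$, writing $A=\bC[\Ab(G)]$ and $M=\bC\otimes_\bZ\Alex(G)$, and viewing $\rho$ as a maximal ideal $\mathfrak m_\rho\subset A$ with residue field $\bC$, the tensor--hom adjunction together with Nakayama's lemma identifies $\Hom_{\bZ[\Ab(G)]}(\Alex(G),\bC_\rho)$ with $\Hom_\bC(M/\mathfrak m_\rho M,\bC)$, which is nonzero exactly when $\mathfrak m_\rho\in\Supp_A(M)$, i.e. when $\rho\in\Supp(\Alex(G))$. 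Hence for $\rho\neq\mathds 1$ one has $\rho\in\Sigma^1(G)$ if and only if $\rho\in\Supp(\Alex(G))$.

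Finally, $H^1(G,\bC)=\Hom(\Ab(G),\bC)$ has dimension $b_1(G)>0$, so $\mathds 1\in\Sigma^1(G)$ always, and combining this with the previous equivalence gives
\[
\Sigma^1(G)=\bigl(\Sigma^1(G)\setminus\{\mathds 1\}\bigr)\cup\{\mathds 1\}=\bigl(\Supp(\Alex(G))\setminus\{\mathds 1\}\bigr)\cup\{\mathds 1\}=\Supp(\Alex(G))\cup\{\mathds 1\}.
\]
I expect the only delicate point to be the identification in the third paragraph: one must check that passing to the torsion-free metabelianization loses no twisted $1$-cocycle — this is matched to $\bC$ being torsion-free — and that the conjugation $\Ab(G)$-action on $\Alex(G)$ is the same module structure entering $\Supp(\Alex(G))$. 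Apart from this, the argument rests only on the classical fact that the Alexander module of a finitely presented group is finitely generated over $\bZ[\Ab(G)]$, which is what makes Nakayama's lemma available.
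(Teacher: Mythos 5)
Your proof is correct and supplies exactly the details the paper leaves implicit when it states that the corollary ``follows from Lemma~\ref{mle}'': rewriting the twisted cocycle condition as $\bZ[\Ab(G)]$-linearity of maps $\Alex(G)\to\bC_\rho$, using finite generation of the Alexander module and Nakayama to convert nonvanishing of $\Hom$ into membership in the support, and handling $\mathds 1$ separately via $b_1(G)>0$. This is the intended argument, presented completely.
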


\begin{proof}[Proof of Proposition \ref{gp}]
Now, we are ready to construct the example satisfying the condition in the proposition. As before, we assume the defining equations of $Z$ to be $f_1, \ldots, f_r$, where $f_i\in \bZ[x_1, x_1^{-1}, \ldots, x_n, x_n^{-1}]$. 

We start with $G_0$, which we define to be the free group of $n$ generators $g_1, \ldots, g_n$. Let $G_1$ be the torsion free metabelianization of $G_0$. The choice of the generators of $G$ gives a natural isomorphism $\Ab(G_1)\cong (\bZ)^n$. Let $x_1, \ldots, x_n$ be the natural coordinates on $(\bZ)^n$. Then $\Alex(G_1)$ has a natural $\bZ[x_1, x_1^{-1}, \ldots, x_n, x_n^{-1}]$-module structure. Since $G_0$ is the fundamental group of $n$-loops, we can realize $\Alex(G_1)\cong \Alex(G_0)$ as the first homology group of the integral ``net" $N\subset \bR^n$. Here $N=\bigcup_{1\leq i\leq n}(\bZ^n+l_i)$, where $l_i$ is the $i$-th coordinate axis. $\bZ^n$ acts on $N$ by translation. Hence $\bZ^n$ also acts on $\Alex(G_1)\cong H_1(N, \bZ)$. In fact, the $\bZ[x_1, x_1^{-1}, \ldots, x_n, x_n^{-1}]$-module structure on $\Alex(G_1)$ can be interpreted this way. 

As a $\bZ[x_1, x_1^{-1}, \ldots, x_n, x_n^{-1}]$-module, $H_1(N, \bZ)$ is generated by the unit squares in each coordinate planes, with a proper choice of orientation. Denote these squares by $\gamma_{ij}$, $1\leq a<b \leq n$. More canonically, we will allow $a\geq b$ and put the condition $\gamma_{ab}+\gamma_{ba}=0$. Each unit cubic induces a relation between these generators. In fact, $H_1(N, \bZ)\cong \bigoplus_{1\leq a, b\leq n}/I$, where $I$ is the ideal generated by elements $\gamma_{ab}+\gamma_{ba}$ and $(x_a-1)\gamma_{bc}+(x_b-1)\gamma_{ca}+(x_c-1)\gamma_{ab}$ for all $1\leq a, b, c \leq n$. It is not hard to see that as a $\bZ[x_1, x_1^{-1}, \ldots, x_n, x_n^{-1}]$-module, $\Alex(G_1)$ has rank $n-1$. 

Since $H_1(N, \bZ)$ is a $\bZ[x_1, x_1^{-1}, \ldots, x_n, x_n^{-1}]$-module, for each $1\leq i\leq r$ and $1\leq a, b \leq n$, $h^i_{ab}\stackrel{\textrm{def}}{=}f_i(x_1, , x_1^{-1}, \ldots, x_n, x_n^{-1})\gamma_{ab}$ is a well-defined element in $H_1(N, \bZ)$. By our construction, the isomorphism $\Alex(G_1)\cong H_1(N, \bZ)$ sends $g_ag_bg_a^{-1}g_b^{-1}$ to $\gamma_{ab}$. Denote the element in $\Alex(G_1)$ corresponding to $h^i_{ab}$ by $\tilde{h}^i_{ab}$. Let $H_2$ be the normal subgroup of $G_2$ generated by $\tilde{h}^i_{ab}$, for all $1\leq i\leq r$ and $1\leq a, b\leq n$. Now, define $G_3$ to be the quotient $G_2/H_2$. Then $\Ab(G_3)=\Ab(G_2)$, and by construction, we have an isomorphism of $\bZ[x_1, x_1^{-1}, \ldots, x_n, x_n^{-1}]$-modules,
$$\Alex(G_3)\cong \Alex(G_2)\otimes_{\bZ[x_1, x_1^{-1}, \ldots, x_n, x_n^{-1}]} \bZ[x_1, x_1^{-1}, \ldots, x_n, x_n^{-1}]/(f_1, \ldots, f_r).$$
Since $\Alex(G_2)$ is of rank $n-1$ as a $\bZ[x_1, x_1^{-1}, \ldots, x_n, x_n^{-1}]$-module, $\Supp (\Alex(G_2))=L(X)$. Therefore, $\Supp(\Alex(G_3))$ is the subvariety defined by $(f_1, \ldots, f_r)$, that is $Z$. 

Finally, by Corollary \ref{lcor}, $\Sigma^1(G_3)=Z\cup\{\mathds 1\}$. Thus we have finished the proof of the proposition. 
\end{proof}

\begin{rem}
For a general connected finite CW-complex $X$, $\Hom(\pi_1(X), \bC^*)$ is a direct product of a complex torus with a finite abelian group. So one can ask a more general question, ``for a finite abelian group $A$, and any (not necessarily irreducible) variety $Z\subset (\bC^*)^n\times A$, does Theorem \ref{main} hold?" The answer is yes, and we briefly describe how to adjust our previous constructions to this situation.

First, we can write the coordinate ring of $\Hom(\pi_1(X), \bC^*)$ as 
$$R\stackrel{\textrm{def}}{=}\bZ[x_1, x_1^{-1}, \ldots, x_n, x_n^{-1}, y_1, \ldots, y_m]/(y_1^{l_1}, \ldots, y_m^{l_m}).$$
Suppose $Z$ is defined by ideal $I=(f_1, \ldots, f_r)$ of $R$. 

In the case of $k\geq 2$, we start with $M_0=(S^1)^n\times Y$, where $Y$ is an Eilenberg-Maclane space of type $K(A, 1)$. Attaching a $k$-sphere to $M_0$, we obtain $M_1$. Then, similar to the previous construction in section 2, we can attach $r$ $(k+1)$-cells to $M_1$ corresponding to the functions $f_1, \ldots, f_r$. Denote the resulting space by $M_2$. Now,  it follows from Theorem \ref{psthm} and the same argument as before that $M_2$ satisfies the cohomology jump loci property in Theorem \ref{main}. Notice that $Y$ can be chosen to be a finite-type CW-complex, but it may not be finite. Therefore, even though $M_2$ may not be a finite CW-complex, it is of finite type. However, by replacing $M_2$ by its $k'$-skeleton, where $k'$ is sufficiently large, it serves as the example of Theorem \ref{main}. 

In the case of $k=1$, again it suffices to talk about group theoretic cohomology jump loci. We start with the group 
$$G_0=<g_1, \ldots, g_n, h_1, \ldots, h_m\,|\, h_1^{l_1}=1, \cdots, h_m^{l_m}=1>.$$
Let $G_1=\TFM(G_0)$. Then $\Alex(G_1)=\Alex(G_0)$ is a finitely generated $R$-module, locally with positive rank. Similar to what we did earlier in this section, by taking a quotient of $G_1$ by the subgroup corresponding to $I\cdot \Alex(G_1)$, we obtain a group $G_2$. Then $\Alex(G_2)\cong \Alex(G_1)\otimes_R R/I$. According to Corollary \ref{lcor}, we have 
$$\Sigma^1(G_2)=Z\cup \{\mathds 1\}.$$
\end{rem}

\end{document}